\newtheorem{theorem}{Theorem}
\newtheorem{lemma}[theorem]{Lemma}
\newenvironment{remark}{\rem\rm}{\endrem}
\newcounter{unnumber}
\newenvironment{proof}{\prf\rm}{\hfill{$\blacksquare$}\endprf}
\newcommand{\R}{\mathbb{R}}%
\newcommand{\N}{\mathbb{N}}%
\DeclareMathOperator*\ran{ran}%
\DeclareMathOperator*\argmin{argmin}
\title{Second order dynamical systems associated to variational inequalities}
\author{Radu Ioan Bo\c{t} \thanks{University of Vienna, Faculty of Mathematics, Oskar-Morgenstern-Platz 1, A-1090 Vienna, Austria,
email: radu.bot@univie.ac.at.} \and
Ern\"{o} Robert Csetnek \thanks {University of Vienna, Faculty of Mathematics, Oskar-Morgenstern-Platz 1, A-1090 Vienna, Austria,
email: ernoe.robert.csetnek@univie.ac.at. Research supported by FWF (Austrian Science Fund), Lise Meitner Programme, project M 1682-N25.}}
\begin{document}
\maketitle

\noindent \textbf{Abstract.} We investigate the asymptotic convergence of the trajectories generated by the second order dynamical 
system $\ddot x(t) + \gamma\dot x(t) + \nabla \phi(x(t))+\beta(t)\nabla \psi(x(t))=0$, where $\phi,\psi:{\cal H}\rightarrow \R$ are 
convex and smooth functions defined on a real Hilbert space ${\cal H}$, $\gamma>0$ and $\beta$ is a function of time which controls 
the penalty term. We show weak convergence of the trajectories to a minimizer of the function $\phi$ over the (nonempty) set 
of minima of $\psi$ as well as convergence for the objective function values along the trajectories, provided a condition expressed 
via the Fenchel conjugate of $\psi$ is fulfilled. When the function $\phi$ is assumed to be strongly convex, we can even show 
strong convergence of the trajectories. The results can be seen as the second order counterparts of the ones given 
by Attouch and Czarnecki (Journal of Differential Equations 248(6), 1315--1344, 2010) for first order dynamical systems associated 
to constrained variational inequalities. At the same time we give a positive answer to an open problem posed in \cite{att-cza-16} by the 
same authors. \vspace{1ex}

\noindent \textbf{Key Words.} dynamical systems, Lyapunov analysis, convex optimization, 
nonautonomous systems\vspace{1ex}

\noindent \textbf{AMS subject classification.} 34G25, 47J25, 47H05, 90C25

\section{Introduction}\label{sec1}

Consider the optimization problem  
\begin{equation}\label{opt-intr}\inf_{x\in\argmin\psi}\phi(x),\end{equation}
where $\phi,\psi: {\cal H}\rightarrow \R\cup\{+\infty\}$ are proper, convex and lower semicontinuous functions defined on  a real Hilbert space ${\cal H}$ endowed with inner product $\langle\cdot,\cdot\rangle$
and associated norm $\|\cdot\|\!=\!\sqrt{\langle \cdot,\cdot\rangle}$. If $\partial \phi + N_{\argmin\psi}$ is maximally monotone, then determining an optimal solution $x \in {\cal H}$ of \eqref{opt-intr} means nothing else than solving the subdifferential inclusion problem
\begin{equation}\label{moninclusion}
\mbox{find} \ x \in {\cal H} \ \mbox{such that} \ 0 \in \partial \phi(x) +  N_{\argmin\psi}(x)
\end{equation}
or, equivalently, solving the variational inequality
\begin{equation}\label{vi}
\mbox{find} \ x \in {\cal H}  \ \mbox{and} \ p \in \partial \phi(x) \ \mbox{such that} \ \langle p, y-x \rangle \geq 0 \ \forall y \in \argmin\psi.
\end{equation}
Here, 
\begin{itemize}
\item $\partial \phi :  {\cal H} \rightrightarrows  {\cal H}$ is the convex subdifferential of $\phi$: $\partial \phi(x)=\{p\in {\cal H}: \phi(y)\geq \phi(x)+\langle p,y-x\rangle \ \forall y\in {\cal H}\}$ for $\phi(x) \in \R$ and  $\partial \phi(x)= \emptyset$ for $\phi(x) \not\in \R$;
\item $\argmin\psi$ denotes the set of minimizers of  $\psi$;
\item $N_{\argmin\psi}$ is the normal cone to the set $\argmin\psi$: $N_{\argmin\psi}(x)=\{p\in{\cal H}:\langle p,y-x\rangle\leq 0 \ \forall y\in \argmin\psi\}$ for  $x \in \argmin\psi$ and $N_{\argmin\psi}(x)=\emptyset$ for $x\not\in \argmin\psi$.
\end{itemize}
Attouch and Czarnecki investigated in \cite{att-cza-10} 
the asymptotic behavior of the trajectories of the nonautonomous first order dynamical system
\begin{equation}\label{1ord-att-cz}0\in\dot x(t)+\partial \phi(x(t))+\beta(t)\partial \psi(x(t))\end{equation}
to a solution of \eqref{moninclusion}, where $\beta:[0,+\infty)\rightarrow(0,+\infty)$ is a function of time assumed to tend to $+\infty$ as $t\rightarrow +\infty$. Several ergodic and nonergodic convergence results have been reported in \cite{att-cza-10} under the key assumption 
\begin{equation}\label{ass-att-cz}\forall p\in\ran N_{\argmin\psi} \ \int_0^{+\infty} 
\beta(t)\left[\psi^*\left(\frac{p}{\beta(t)}\right)-\sigma_{\argmin\psi}\left(\frac{p}{\beta(t)}\right)\right]dt<+\infty,
\end{equation}
where \begin{itemize}
\item $\psi^*: {\cal H}\rightarrow \R\cup\{+\infty\}$ is the Fenchel conjugate of $\psi$:
$\psi^*(p)=\sup_{x\in{\cal H}}\{\langle p,x\rangle-\psi(x)\} \ \forall p\in {\cal H};$ 
\item $\sigma_{\argmin\psi}: {\cal H}\rightarrow \R\cup\{+\infty\}$ is the support function of the set $\argmin\psi$:  
$\sigma_{\argmin\psi}(p)=\sup_{x\in {\argmin\psi}}\langle p,x\rangle$ for all $p\in {\cal H}$; 
\item $\ran N_{\argmin\psi}$ is the range of the normal cone $N_{\argmin\psi}$: that is $p\in\ran N_{\argmin\psi}$ if and only if there exists $x\in \argmin\psi$ such that $p\in N_{\argmin\psi}(x)$. 
\end{itemize}
We mention that $N_{\argmin\psi}=\partial \delta_{\argmin\psi}$, where $\delta_{\argmin\psi}:{\cal H}\rightarrow\R\cup\{+\infty\}$ is the indicator function 
of ${\argmin\psi}$, which takes the value $0$ on the set ${\argmin\psi}$ and $+\infty$, otherwise. Moreover, for $x\in {\argmin\psi}$ one has 
$p\in N_{\argmin\psi}(x)$ if and only if $\sigma_{\argmin\psi}(p)=\langle p,x\rangle$. 

Let us present a situation where the above condition \eqref{ass-att-cz} is fulfilled. According to \cite{att-cza-10}, if we take $\psi(x)=\frac{1}{2}\inf_{y\in C}\|x-y\|^2$, for a nonempty, convex and closed set $C \subseteq {\cal H}$, then the condition \eqref{ass-att-cz} is fulfilled if and only if 
$$\int_0^{+\infty}\frac{1}{\beta(t)} dt<+\infty,$$ which is trivially satisfied for $\beta(t)=(1+t)^\alpha$ with $\alpha>1$. 

The paper of Attouch and Czarnecki \cite{att-cza-10} was the starting point of a considerable number of research articles devoted to this subject, including those addressing generalizations to variational inequalities involving maximal monotone operators (see \cite{att-cza-10, att-cza-peyp-c, att-cza-peyp-p, noun-peyp, peyp-12, b-c-penalty-svva, b-c-penalty-vjm, banert-bot-pen, b-c-dyn-pen, att-cab-cz, att-maing}). In the literature enumerated above, the monotone inclusions problems have been approached
either through continuous dynamical systems or through their discrete counterparts formulated as splitting algorithms, both of penalty type. We speak in both cases about methods of penalty type, as the operator describing the underlying set of the addressed variational inequality is evaluated 
as a penalty functional. We refer also to the above-listed references for more general formulations
of the key assumption \eqref{ass-att-cz} and for further examples for which these conditions are satisfied. 

In this paper we are concerned with the asymptotic behavior of the second order dynamical system 
\begin{equation}\label{sec-or-intr}\ddot x(t) + \gamma\dot x(t) + \nabla \phi(x(t))+\beta(t)\nabla \psi(x(t))=0,\end{equation}
when the functions $\phi,\psi: {\cal H}\rightarrow \R$ are supposed to be convex and (Fr\'echet) differentiable with a Lipschitz continuous gradient. 

We show weak convergence of the trajectories to an optimal solution to \eqref{opt-intr} as well as 
convergence for the objective function values along the trajectories to the optimal objective value, 
provided \eqref{ass-att-cz} is fulfilled and the function $\beta$ satisfies some mild conditions. In case the 
function $\phi$ is strongly convex, the trajectories converge even strongly to the unique optimal solution of \eqref{opt-intr}. 

Let us also mention that the solving of \eqref{opt-intr} has been approached by means of second order dynamical system also in 
\cite[Section 3]{att-maing} by penalizing the gradient of the objective function with a vanishing coefficient. Strong asymptotic 
convergence of the trajectories to an optimal solution has been proved under the assumption that this gradient is strongly monotone 
(see also \cite{att-cza-02}). 

During the reviewing process of the initial version of our paper we became aware of a new preprint by Attouch and Czarnecki \cite{att-cza-16}, where a second order dynamical system connected to
\eqref{opt-intr}  is investigated, by penalizing the objective function of the latter through a nonincreasing 
function. On page 11 in \cite{att-cza-16} the authors leave as an open problem the approach of \eqref{opt-intr} via the second order 
dynamical system \eqref{sec-or-intr}, which this time penalizes the function describing the constraint set. 
The results presented in this paper  provide a positive answer to this question. 

\section{Preliminaries}\label{sec2}

In this section we present some preliminary results and tools that will be useful throughout the paper. 

The following results can be interpreted as continuous counterparts of the convergence statements for quasi-Fej\'er monotone sequences. For their proofs we refer the reader to 
\cite[Lemma 5.1]{abbas-att-sv} and \cite[Lemma 2.2 and 2.3]{alvarez2004}, respectively.

\begin{lemma}\label{fejer-cont1} Suppose that $F:[0,+\infty)\rightarrow\R$ is locally absolutely continuous and bounded from below and that
there exists $G\in L^1([0,+\infty))$ such that for almost every $t \in [0,+\infty)$ $$\dot F(t)\leq G(t).$$ 
Then there exists $\lim_{t\rightarrow \infty} F(t)\in\R$. 
\end{lemma}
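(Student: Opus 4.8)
The plan is to define an auxiliary function that absorbs the $L^1$ part of the derivative and is manifestly nonincreasing, then invoke monotone convergence. Concretely, set $H(t) := F(t) - \int_0^t G(s)\,ds$. Since $F$ is locally absolutely continuous and $t\mapsto \int_0^t G(s)\,ds$ is absolutely continuous on every bounded interval (as $G\in L^1$), the function $H$ is locally absolutely continuous, hence differentiable almost everywhere with $\dot H(t) = \dot F(t) - G(t) \le 0$ for a.e.\ $t\in[0,+\infty)$. A locally absolutely continuous function with a.e.\ nonpositive derivative is nonincreasing on $[0,+\infty)$: for $0\le t_1\le t_2$ one has $H(t_2)-H(t_1)=\int_{t_1}^{t_2}\dot H(s)\,ds\le 0$.

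Next I would argue that $H$ is bounded from below. We have $H(t) = F(t) - \int_0^t G(s)\,ds \ge F(t) - \int_0^{+\infty} |G(s)|\,ds \ge \inf_{[0,+\infty)} F - \|G\|_{L^1}$, where the right-hand side is a finite constant because $F$ is bounded from below by hypothesis and $G\in L^1([0,+\infty))$. A nonincreasing function that is bounded from below converges as $t\to+\infty$ to a finite limit, so $\lim_{t\to+\infty} H(t)$ exists in $\R$.

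Finally, since $\int_0^t G(s)\,ds \to \int_0^{+\infty} G(s)\,ds \in \R$ as $t\to+\infty$ (again because $G\in L^1$), we recover $\lim_{t\to+\infty} F(t) = \lim_{t\to+\infty} H(t) + \int_0^{+\infty} G(s)\,ds \in \R$, which is the assertion. The only point requiring a little care is the justification that a locally absolutely continuous function with a.e.\ nonpositive derivative is genuinely nonincreasing — this is exactly where absolute continuity (as opposed to mere differentiability a.e.) is used, via the fundamental theorem of calculus for absolutely continuous functions; everything else is routine.
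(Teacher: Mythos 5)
Your proof is correct: the auxiliary function $H(t)=F(t)-\int_0^t G(s)\,ds$ is locally absolutely continuous, nonincreasing (by the fundamental theorem of calculus for absolutely continuous functions) and bounded from below, hence convergent, and adding back the convergent integral $\int_0^t G(s)\,ds$ gives the finite limit of $F$. The paper itself does not prove this lemma but cites \cite[Lemma 5.1]{abbas-att-sv}, and your argument is exactly the standard one used there, so there is nothing to add.
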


\begin{lemma}\label{sec-ord-fej} Suppose that $F:[0,+\infty)\rightarrow\R$ is locally absolutely continuous
such that $F$ is bounded from below, $\dot F$ is locally absolutely continuous and there exist $\gamma >0$ and $G\in L^1([0,+\infty))$ fulfilling for almost every 
$t \in [0,+\infty)$ $$\ddot F(t)+\gamma\dot F(t)\leq G(t).$$ 
Then there exists $\lim_{t\rightarrow \infty} F(t)\in\R$. 
\end{lemma}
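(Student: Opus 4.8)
The plan is to reduce the second-order differential inequality to the first-order situation already treated in Lemma~\ref{fejer-cont1}, by regarding $w:=\dot F$ as the solution of the first-order linear differential inequality $\dot w(t)+\gamma w(t)\le G(t)$ and solving it with the integrating factor $e^{\gamma t}$.

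First I would note that, after modification on a null set, $\dot F$ is continuous, so $\dot F(0)$ makes sense, and that $t\mapsto e^{\gamma t}\dot F(t)$ is locally absolutely continuous with a.e. derivative $e^{\gamma t}\big(\ddot F(t)+\gamma\dot F(t)\big)\le e^{\gamma t}G(t)$. Integrating this inequality on $[0,t]$ and using $G(s)\le|G(s)|$ yields, for almost every $t\ge 0$,
\[
\dot F(t)\ \le\ G_1(t)\ :=\ e^{-\gamma t}\dot F(0)+e^{-\gamma t}\int_0^t e^{\gamma s}|G(s)|\,ds .
\]

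The next step is to check that $G_1\in L^1([0,+\infty))$. The term $e^{-\gamma t}\dot F(0)$ is clearly integrable. For the remaining (nonnegative) term I would invoke the Tonelli theorem to exchange the order of integration,
\[
\int_0^{+\infty}e^{-\gamma t}\int_0^t e^{\gamma s}|G(s)|\,ds\,dt
=\int_0^{+\infty}e^{\gamma s}|G(s)|\Big(\int_s^{+\infty}e^{-\gamma t}\,dt\Big)ds
=\frac{1}{\gamma}\int_0^{+\infty}|G(s)|\,ds<+\infty ,
\]
which is finite because $G\in L^1([0,+\infty))$. It is important here to keep the factor $e^{-\gamma t}$ in front of the inner integral and never to pass to a statement about $e^{\gamma t}\dot F(t)$ in isolation, since $e^{\gamma t}G(t)$ need not be integrable; this is precisely the place where the extra damping term $\gamma\dot F$ is exploited.

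Finally, since $F$ is bounded from below, $\dot F(t)\le G_1(t)$ for almost every $t$, and $G_1\in L^1([0,+\infty))$, Lemma~\ref{fejer-cont1} applies directly and gives the existence of $\lim_{t\to+\infty}F(t)\in\R$. I do not anticipate any real obstacle: the only delicate points are the routine justifications for locally absolutely continuous functions (the fundamental theorem of calculus applied to $\dot F$ and to $e^{\gamma t}\dot F(t)$), with the integrating-factor computation being the genuine content of the argument.
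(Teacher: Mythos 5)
Your argument is correct: the integrating-factor reduction of $\ddot F+\gamma\dot F\le G$ to $\dot F\le G_1$ with $G_1\in L^1$ (via Tonelli) followed by an application of Lemma~\ref{fejer-cont1} is exactly the standard proof. The paper itself does not prove this lemma but refers to \cite[Lemmas 2.2 and 2.3]{alvarez2004}, where essentially this same reduction is carried out, so your proposal matches the intended argument.
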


We are interested in the asymptotic analysis of the following second order nonautonomous dynamical system: 
\begin{equation}\label{dyn-syst}\left\{
\begin{array}{ll}
\ddot x(t) + \gamma\dot x(t) + \nabla \phi(x(t))+\beta(t)\nabla \psi(x(t))=0\\
x(0)=u_0, \dot x(0)=v_0,
\end{array}\right.\end{equation}
where ${\cal H}$ is a real Hilbert space, $\gamma>0$, $u_0,v_0\in {\cal H}$, 
$\beta:[0,+\infty)\rightarrow (0,+\infty)$ is a locally integrable function and the following conditions hold:
\begin{align*}(H_\psi)& \ \psi:{\cal H}\rightarrow [0,+\infty) \mbox{ is convex, (Fr\'{e}chet) differentiable}
 \mbox{ with Lipschitz continuous gradient}\\& \mbox{ and }\argmin\psi=\psi^{-1}(0)\neq\emptyset;\\
(H_\phi)& \ \phi:{\cal H}\rightarrow \R \mbox{ is convex, (Fr\'{e}chet) differentiable with Lipschitz continuous gradient,}\\
& \mbox{ bounded from below and }S:=\{z\in \argmin\psi: \phi(z)\leq \phi(x) \  \forall x\in\argmin\psi\}\neq\emptyset. \end{align*}

We call $x:[0,+\infty)\rightarrow{\cal H}$ (strong global) solution of \eqref{dyn-syst} if $x$ and $\dot x$ 
are locally absolutely continuous (in other words, absolutely continuous on each interval $[0,b]$ for $0<b<+\infty$), $x(0)=u_0, \dot x(0)=v_0$ and $\ddot x(t) + \gamma\dot x(t) + \nabla \phi(x(t))+\beta(t)\nabla \psi(x(t))=0$ for almost every $t \in [0,+\infty)$.

Due to the Lipschitz continuity of $\nabla \psi$ and $\nabla \phi$ assumed in $(H_\psi)$ and $(H_\phi)$, respectively, the existence 
and uniqueness of (strong global) solutions of \eqref{dyn-syst} is a consequence of the Cauchy-Lipschitz-Picard Theorem 
(see for example \cite{att-maing, b-c-dyn-sec-ord, alv-att-bolte-red, cabot-engler-gadat-tams}). 

In the following sections we provide sufficient conditions which guarantee both ergodic and nonergodic convergence  of the trajectory of \eqref{dyn-syst} to an optimal solution of \eqref{opt-intr}.

\begin{remark}\label{rem-heavy-ball} \begin{enumerate}\item[(a)] In case $\psi=0$ the dynamical system \eqref{dyn-syst} becomes the second order gradient system 
associated to the the heavy ball method (see also \cite{alvarez2000, att-alv}): 
\begin{equation}\label{heavy-ball}\left\{
\begin{array}{ll}
\ddot x(t) + \gamma\dot x(t) + \nabla \phi(x(t))=0\\
x(0)=u_0, \dot x(0)=v_0.
\end{array}\right.\end{equation}
In this case $\argmin \psi={\cal H}$ and  \eqref{opt-intr} is nothing else than the minimization of the function $\phi$ over ${\cal H}$.

\item[(b)] It is well known that time discretization of second order dynamical systems leads to iterative algorithms involving inertial terms, which 
basically means that  the new iterate is defined by making use of the previous two iterates (see for example \cite{alvarez2004, alvarez-attouch2001}). 
We refer the reader to  \cite{b-c-penalty-inertial} where an inertial algorithm of penalty type has been proposed and investigated from the point of view of its convergence properties.
\end{enumerate}
\end{remark}

\section{Convergence of the trajectories and of the objective function values}\label{sec3}

In this section we show that we have weak convergence for the trajectory generated by the dynamical system \eqref{dyn-syst} to an optimal solution 
of \eqref{opt-intr} as well as convergence for the objective function values along the trajectory. 
To this end, we make use of the following supplementary assumptions: 
\begin{enumerate}
\item[($H_{\beta}$)] $\beta :[0,+\infty) \rightarrow (0,+\infty)$ is a $C^1$-function and it satisfies the growth condition $0\leq\dot\beta\leq k\beta$, where $0 \leq k<\gamma$;
 \item[$(H)$] $\forall p\in\ran N_{\argmin \psi} \ \int_0^{+\infty} 
\beta(t)\left[\psi^*\left(\frac{p}{\beta(t)}\right)-\sigma_{\argmin \psi}\left(\frac{p}{\beta(t)}\right)\right]dt<+\infty$. 
\end{enumerate}

\begin{remark}
\begin{itemize} 
\item[(a)] Under $(H_\psi)$, due to $\psi\leq\delta_{\argmin \psi}$, we 
have $$\psi^*\geq\delta_{\argmin \psi}^*=\sigma_{\argmin \psi}.$$
\item[(b)] When $\psi=0$ (see Remark \ref{rem-heavy-ball}(a)), it holds $N_{\argmin \psi}(x)=\{0\}$ for every $x\in \argmin \psi={\cal H}$, 
$\psi^*=\sigma_{\argmin \psi}=\delta_{\{0\}}$ and $(H)$ trivially holds. 
\end{itemize}
\end{remark}

We start with the following technical result.

\begin{lemma}\label{l1-op} Assume that $(H_\psi)$, $(H_\phi)$, 
$(H_{\beta})$ and $(H)$ hold and let $x:[0,+\infty)\rightarrow{\cal H}$ be the trajectory generated by the dynamical system \eqref{dyn-syst}. Then for every $z\in S$ the following statements are true:
\begin{enumerate}
 \item[(i)] $x$ and $\dot x$ are bounded;  
 \item[(ii)] $\int_0^{+\infty}\beta(t)\psi(x(t))dt<+\infty$; 
 \item[(iii)] $\exists\lim_{t\rightarrow+\infty}\int_0^t\langle \nabla\phi(z),x(s)-z\rangle ds\in\R$;
 \item[(iv)] $\exists\lim_{t\rightarrow+\infty}\int_0^t\left(\phi(x(s))-\phi(z)+\left(1-\frac{k}{\gamma}\right)\beta(s)\psi(x(s))\right)ds\in\R$; 
 \item[(v)] $\dot x\in L^2([0,+\infty);{\cal H})$; 
 \item[(vi)] $\exists\lim_{t\rightarrow+\infty}\|x(t)-z\|\in\R$. 
\end{enumerate}
\end{lemma}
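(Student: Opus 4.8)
The standard tool for this kind of result is a Lyapunov-type analysis built around the anchor function $h_z(t):=\tfrac12\|x(t)-z\|^2$ for a fixed $z\in S$. First I would compute $\dot h_z(t)=\langle \dot x(t),x(t)-z\rangle$ and $\ddot h_z(t)=\|\dot x(t)\|^2+\langle \ddot x(t),x(t)-z\rangle$, then use the equation \eqref{dyn-syst} to replace $\ddot x(t)=-\gamma\dot x(t)-\nabla\phi(x(t))-\beta(t)\nabla\psi(x(t))$. This yields
$$\ddot h_z(t)+\gamma\dot h_z(t)=\|\dot x(t)\|^2-\langle\nabla\phi(x(t)),x(t)-z\rangle-\beta(t)\langle\nabla\psi(x(t)),x(t)-z\rangle.$$
Now I invoke convexity: $\langle\nabla\phi(x(t)),x(t)-z\rangle\ge\phi(x(t))-\phi(z)$ and, since $z\in\argmin\psi$ so $\psi(z)=0$, $\langle\nabla\psi(x(t)),x(t)-z\rangle\ge\psi(x(t))$. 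Hence
$$\ddot h_z(t)+\gamma\dot h_z(t)\le\|\dot x(t)\|^2-\big(\phi(x(t))-\phi(z)\big)-\beta(t)\psi(x(t)).$$
The term $\phi(x(t))-\phi(z)$ need not be nonnegative (since $z$ minimizes $\phi$ only over $\argmin\psi$, not over $\mathcal H$), so this is where condition $(H)$ enters: one must split $\beta(t)\psi(x(t))$ as $\tfrac{k}{\gamma}\beta(t)\psi(x(t))+(1-\tfrac{k}{\gamma})\beta(t)\psi(x(t))$, absorb the lower-order deficit of $\phi$ using a Young/Fenchel inequality of the form $\langle p,x(t)-z\rangle\le\psi^*(p/\beta(t))\beta(t)+\beta(t)\psi(x(t))-\langle p,z\rangle$ for a suitable $p\in\ran N_{\argmin\psi}$ (namely $p=-\nabla\phi(z)$, which lies in $N_{\argmin\psi}(z)$ by the optimality of $z$ in \eqref{vi}), and thereby reduce the right-hand side to an $L^1$ function plus a nonpositive quantity.

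Concretely, I expect the key estimate to take the shape
$$\ddot h_z(t)+\gamma\dot h_z(t)+\Big(\phi(x(t))-\phi(z)+\big(1-\tfrac{k}{\gamma}\big)\beta(t)\psi(x(t))\Big)\le\|\dot x(t)\|^2+\varepsilon(t),$$
where $\varepsilon\in L^1([0,+\infty))$ comes precisely from $(H)$ applied to $p=-\nabla\phi(z)$ and possibly from the subdifferential inequality applied at $z$ together with $\sigma_{\argmin\psi}(p/\beta(t))=\langle p/\beta(t),z\rangle$. The nonnegativity of the bracketed expression — the combination $\phi(x(t))-\phi(z)+(1-k/\gamma)\beta(t)\psi(x(t))$ being asymptotically controllable — is the heart of the matter and uses $0\le k<\gamma$ essentially.

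From this differential inequality the six conclusions follow in a chain. For (vi), (v), (ii) and (iv) I would integrate the inequality twice (or once, after rearranging into the form $\ddot{(\cdot)}+\gamma\dot{(\cdot)}\le G(t)$ with $G\in L^1$) and apply Lemma \ref{sec-ord-fej} and Lemma \ref{fejer-cont1}: a first integration controls $\int_0^{+\infty}\|\dot x(t)\|^2\,dt$ and $\int_0^{+\infty}\beta(t)\psi(x(t))\,dt$ together with the existence of $\lim h_z(t)$, which simultaneously gives (v), (ii), (vi); a further use of Lemma \ref{fejer-cont1} on the antiderivative gives (iv), and then (iii) follows from (iv) and (ii) since $\phi(x(s))-\phi(z)\ge\langle\nabla\phi(z),x(s)-z\rangle$ by convexity, combined again with the Fenchel bound to get the reverse inequality up to an $L^1$ term. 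Finally (i): boundedness of $x$ is immediate from (vi) (the distance to $z$ converges, hence is bounded), and boundedness of $\dot x$ requires a separate short argument — typically bounding $\|\dot x(t)\|$ via the energy functional $E(t)=\tfrac12\|\dot x(t)\|^2+\phi(x(t))+\beta(t)\psi(x(t))$, showing $\dot E(t)\le(\dot\beta(t)-\gamma\|\dot x(t)\|^2\cdot 0)\psi(x(t))-\gamma\|\dot x(t)\|^2\le\dot\beta(t)\psi(x(t))$, which is integrable by (ii) and $(H_\beta)$ (since $\dot\beta\le k\beta$), so $E$ is bounded, hence so is $\|\dot x\|$.

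The main obstacle I anticipate is arranging the Fenchel–Young step so that the $\phi$-deficit is absorbed cleanly: one has to choose the splitting coefficient $k/\gamma$ exactly right so that what is left over on the right-hand side is honestly $L^1$ (using $(H)$ with $p=-\nabla\phi(z)$ and the homogeneity scaling $\beta(t)\psi^*(p/\beta(t))$), while what is kept on the left, $(1-k/\gamma)\beta(t)\psi(x(t))$, stays nonnegative with a strictly positive coefficient. Getting the bookkeeping of these constants consistent across all six items — in particular making the same $G\in L^1$ serve both Lemma \ref{fejer-cont1} and Lemma \ref{sec-ord-fej} — is the delicate part; the rest is routine integration and convexity.
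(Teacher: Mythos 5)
Your overall strategy is the paper's: the anchor function $h_z(t)=\tfrac12\|x(t)-z\|^2$, convexity of $\phi$ and $\psi$, the optimality relation $-\nabla\phi(z)\in N_{\argmin\psi}(z)$ fed into a Fenchel--Young estimate so that $(H)$ produces an $L^1$ majorant, the energy $E(t)=\tfrac12\|\dot x(t)\|^2+\phi(x(t))+\beta(t)\psi(x(t))$, and Lemmas \ref{fejer-cont1} and \ref{sec-ord-fej}. However, there is a genuine gap in how you propose to run the chain. Your key estimate keeps $\|\dot x(t)\|^2$ on the majorizing side, and at that stage nothing controls it: integrating
$\ddot h_z+\gamma\dot h_z+\phi(x(t))-\phi(z)+\beta(t)\psi(x(t))\le\|\dot x(t)\|^2+\varepsilon(t)$
cannot "control $\int_0^{+\infty}\|\dot x\|^2$ and $\int_0^{+\infty}\beta\psi(x)$", since $\int\|\dot x\|^2$ sits on the wrong side and the only available bound for it, $\gamma\int_0^T\|\dot x\|^2\le E(0)-\inf E+k\int_0^T\beta\psi(x)$ (from $\dot E=-\gamma\|\dot x\|^2+\dot\beta\psi(x)$ and $(H_\beta)$), itself requires information on $\int\beta\psi(x)$. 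The missing idea is the exact cancellation used in the paper: add $\tfrac1\gamma\dot E(t)$ to the $h_z$-inequality so that the kinetic terms cancel identically, leaving (after $\dot\beta\le k\beta$) the inequality \eqref{h-e-2}, i.e. $\ddot h_z+\gamma\dot h_z+\tfrac1\gamma\dot E+\phi(x(t))-\phi(z)+\bigl(1-\tfrac k\gamma\bigr)\beta(t)\psi(x(t))\le0$, with no $\|\dot x\|^2$ on the right. This is also where the factor $1-k/\gamma$ actually comes from (the $\tfrac{\dot\beta}{\gamma}\psi$ term of $\dot E$), not from splitting $\beta\psi$ to absorb the $\phi$-deficit, which is absorbed by Fenchel--Young with the full $\widetilde\beta$. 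One can equivalently perform your substitution of the energy bound at the integrated level and recover the same coefficient, but some coupling with $E$ must be made explicit before any of (i)--(vi) can be extracted; your plan invokes $E$ only at the very end.

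This omission makes your deduction order circular: you get boundedness of $\dot x$ from $E$ being bounded, which you justify by (ii); (ii) is to come from the "first integration", which needs $\int\|\dot x\|^2$ controlled, i.e. essentially (v); and (vi), which you list among the first-integration outputs and from which you deduce (i), in fact requires (v) so that $\|\dot x\|^2$ is an $L^1$ majorant for Lemma \ref{sec-ord-fej}. The workable order (the paper's) is: first (i), by integrating the $E$-coupled inequality and solving the resulting first-order differential inequality for $h_z$; then (ii), where a further trick is needed that you gloss over --- since $\langle\nabla\phi(z),x(t)-z\rangle$ may change sign, one applies the Fenchel--Young bound twice, with coefficients $\widetilde\beta$ and $\widetilde\beta/2$ (legitimate because $\ran N_{\argmin\psi}$ is a cone, so $(H)$ rescales), and subtracts the two limits obtained from Lemma \ref{fejer-cont1} to isolate $\int\beta\psi(x)$; then (iii), (iv) follow, (v) comes from integrating $\dot E+\gamma\|\dot x\|^2\le k\beta\psi(x)$ using (ii), and only then (vi) via Lemma \ref{sec-ord-fej}. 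So the ingredients you name are the right ones, but as written the argument does not close.
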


\begin{proof} Take an arbitrary $z\in S$ and define $$h_z(t)=\frac{1}{2}\|x(t)-z\|^2$$
and the energy functional \begin{equation}\label{E}E(t)=\frac{1}{2}\|\dot x(t)\|^2+\phi(x(t))+\beta(t)\psi(x(t)).\end{equation}
It immediately follows that for almost every $t \in [0,+\infty)$ 
\begin{equation}\label{dh}\dot h_z(t)=\langle \dot x(t),x(t)-z\rangle,\end{equation} 
\begin{equation}\label{ddh}\ddot h_z(t)=\|\dot x(t)\|^2+\langle\ddot x(t),x(t)-z\rangle,\end{equation}
hence, due to \eqref{dyn-syst}
\begin{equation}\label{h2-h1} \ddot h_z(t)+\gamma\dot h_z(t)=\|\dot x(t)\|^2+\langle -\nabla \phi(x(t))-\beta(t)\nabla\psi(x(t)),x(t)-z\rangle       .
\end{equation}
Further, from \eqref{dyn-syst} we obtain  for almost every $t \in [0,+\infty)$ 
\begin{align}\label{e1}\dot E(t)& =\langle \ddot x(t),\dot x(t)\rangle+\langle \dot x(t),\nabla\phi(x(t))\rangle+
\beta(t)\langle \dot x(t),\nabla\psi(x(t))\rangle+\dot \beta(t)\psi(x(t))\nonumber\\
& =\langle\dot x(t),\ddot x(t)+\nabla\phi(x(t))+\beta(t)\nabla\psi(x(t))\rangle+\dot \beta(t)\psi(x(t))\nonumber\\
& = -\gamma\|\dot x(t)\|^2+\dot \beta(t)\psi(x(t)).\end{align}
By combining \eqref{h2-h1} and \eqref{e1} we derive for almost every $t \in [0,+\infty)$ 
\begin{equation}\label{h-e} \ddot h_z(t)+\gamma\dot h_z(t)+\frac{1}{\gamma}\dot E(t)-\frac{\dot \beta(t)}{\gamma}\psi(x(t))= 
\langle -\nabla \phi(x(t))-\beta(t)\nabla\psi(x(t)),x(t)-z\rangle. 
\end{equation}          
The convexity of the functions $\phi$ and $\psi$, the fact that $z\in \argmin\psi$ (hence $\psi(z)=0$) and the non-negativity of $\beta$ and 
$\psi$ yield 
$$\langle -\nabla \phi(x(t))-\beta(t)\nabla\psi(x(t)),x(t)-z\rangle\leq \phi(z)-\phi(x(t))-\beta(t)\psi(x(t)).$$
This inequality in combination with \eqref{h-e} implies 
\begin{equation}\label{h-e-1} \ddot h_z(t)+\gamma\dot h_z(t)+\frac{1}{\gamma}\dot E(t)-\frac{\dot \beta(t)}{\gamma}\psi(x(t)) 
\leq\phi(z)-\phi(x(t))-\beta(t)\psi(x(t))
\end{equation} 
for almost every $t \in [0,+\infty)$. By using now the growth condition for $\beta$ we get 
\begin{equation}\label{h-e-2} \ddot h_z(t)+\gamma\dot h_z(t)+\frac{1}{\gamma}\dot E(t) 
+\phi(x(t))-\phi(z)+\widetilde\beta(t)\psi(x(t))\leq 0 \ \mbox{for almost every} \ t \in [0,+\infty), 
\end{equation} 
where \begin{equation}\label{def-beta-tilde}\widetilde\beta(t):=\left(1-\frac{k}{\gamma}\right)\beta(t).\end{equation}
Since $z$ is an optimal solution for \eqref{opt-intr}, the first order optimality condition 
delivers us \begin{equation}\label{zero-in-partial}0\in\partial (\phi+\delta_{\argmin \psi})(z)=\nabla\phi(z)+ N_{\argmin \psi}(z),\end{equation}
hence \begin{equation}\label {opt-cond} -\nabla \phi(z)\in N_{\argmin \psi}(z) \subseteq \ran N_{\argmin \psi}.\end{equation}

By using this fact and the Young-Fenchel inequality we obtain for every $t \in [0,+\infty)$
\begin{align}\label{ineq-conj}\widetilde\beta(t)\psi(x(t))+\langle \nabla \phi(z),x(t)-z\rangle & =
\widetilde\beta(t)\left(\psi(x(t))+\left\langle \frac{\nabla \phi(z)}{\widetilde\beta(t)},x(t)-z\right\rangle\right)\nonumber\\
& = \widetilde\beta(t)\left(\psi(x(t))-\left\langle \frac{-\nabla \phi(z)}{\widetilde\beta(t)},x(t)\right\rangle+\sigma_{\argmin \psi}\left(\frac{-\nabla \phi(z)}{\widetilde\beta(t)}\right)\right)\nonumber\\
& \geq \widetilde\beta(t)\left(-\psi^*\left(\frac{-\nabla \phi(z)}{\widetilde\beta(t)}\right)+\sigma_{\argmin \psi}\left(\frac{-\nabla \phi(z)}{\widetilde\beta(t)}\right)\right).\end{align}

Furthermore, \begin{equation}\label{phi-conv}\phi(x(t))-\phi(z)\geq \langle \nabla \phi(z),x(t)-z\rangle \ \forall t \in [0,+\infty).\end{equation}

Finally, from \eqref{h-e-2}, \eqref{ineq-conj} and \eqref{phi-conv} we obtain for almost every $t \in [0,+\infty)$
\begin{align} & \ddot h_z(t)+\gamma\dot h_z(t)+\frac{1}{\gamma}\dot E(t)+
\widetilde\beta(t)\left(-\psi^*\left(\frac{-\nabla\phi(z)}{\widetilde\beta(t)}\right)+\sigma_{\argmin \psi}\left(\frac{-\nabla\phi(z)}{\widetilde\beta(t)}\right)\right)\nonumber\\
\leq & \ \ddot h_z(t)+\gamma\dot h_z(t)+\frac{1}{\gamma}\dot E(t) +\widetilde\beta(t)\psi(x(t))+\langle \nabla\phi(z),x(t)-z\rangle\nonumber\\
\leq &\ \ddot h_z(t)+\gamma\dot h_z(t)+\frac{1}{\gamma}\dot E(t) 
+\phi(x(t))-\phi(z)+\widetilde\beta(t)\psi(x(t))\nonumber\\\leq & \ 0.\label{ineq2'}
\end{align}

Thus 
\begin{equation}\label{ineq3}\ddot h_z(t)+\gamma\dot h_z(t)\leq -\frac{1}{\gamma}\dot E(t)+
\widetilde\beta(t)\left(\psi^*\left(\frac{-\nabla\phi(z)}{\widetilde\beta(t)}\right)-\sigma_{\argmin \psi}\left(\frac{-\nabla\phi(z)}{\widetilde\beta(t)}\right)\right)\end{equation}
 for almost every $t \in [0,+\infty)$.
 
(i) From \eqref{ineq2'} we have for almost every $t \in [0,+\infty)$
\begin{align*}
 \ddot h_z(t)+\gamma\dot h_z(t)+\frac{1}{\gamma}\dot E(t)\leq
\widetilde\beta(t)\left(\psi^*\left(\frac{-\nabla\phi(z)}{\widetilde\beta(t)}\right)-\sigma_{\argmin \psi}\left(\frac{-\nabla\phi(z)}{\widetilde\beta(t)}\right)\right).
\end{align*}
By integrating this inequality from $0$ to $T$ ($T>0$) and by taking into account $(H)$, \eqref{E} and the fact that $\phi$ and $\psi$ 
are bounded from below, it yields that there exists $M>0$ such that  
\begin{equation}\label{ineq4}\dot h_z(T)+\gamma h_z(T)+\frac{1}{2\gamma}\|\dot x(T)\|^2\leq M \ \forall T\geq 0.\end{equation}
We derive $$\dot h_z(T)+\gamma h_z(T)\leq M \ \forall T\geq 0.$$
By multiplying this inequality with $\exp(\gamma T)$ and then integrating from $0$ to $s$, where $s > 0$, one easily obtains
$$h_z(s)\leq h_z(0)\exp(-\gamma s)+\frac{M}{\gamma}(1-\exp(-\gamma s)) \ \forall s> 0.$$
Thus $h_z$ is bounded, hence $x$ is bounded. 

Further, \eqref{ineq4} implies $$\dot h_z(T)+\frac{1}{2\gamma}\|\dot x(T)\|^2\leq M \ \forall T\geq 0.$$

Since $t\mapsto x(t)$ is bounded, we deduce from the above inequality and \eqref{dh} that $\dot x$ is bounded as well. 

(ii) Let $F:[0,+\infty)\rightarrow\R$ be defined by 
$$F(t)=\int_0^t\left(-\widetilde\beta(s)\psi(x(s))-\langle\nabla\phi(z),x(s)-z\rangle\right)ds \ \forall t\geq 0.$$

Making again use of \eqref{ineq2'} we have for almost every $t \in [0,+\infty)$ 
$$\ddot h_z(t)+\gamma\dot h_z(t)+\frac{1}{\gamma}\dot E(t)\leq-\widetilde\beta(t)\psi(x(t))-\langle \nabla \phi(z),x(t)-z\rangle.$$
Since $h_z$ and $E$ are bounded from below and $\dot h_z$ is bounded, by integrating the last inequality we derive that 
$F$ is bounded from below. Moreover, from \eqref{ineq-conj} we have for almost every $t \in [0,+\infty)$ 
$$\dot F(t)\leq \widetilde\beta(t)\left(\psi^*\left(\frac{-\nabla\phi(z)}{\widetilde\beta(t)}\right)
-\sigma_{\argmin \psi}\left(\frac{-\nabla\phi(z)}{\widetilde\beta(t)}\right)\right).$$ 
Since according to (H), the function on the right-hand side of this inequality is $L^1$-integrable on $[0,+\infty)$, a direct application of 
Lemma \ref{fejer-cont1} yields that $\lim_{t\rightarrow+\infty}F(t)$ exists and is a real number. Hence 
\begin{equation}\label{b}\exists \lim_{t\rightarrow+\infty}\int_0^t\left(\widetilde\beta(s)\psi(s)+\langle \nabla \phi(z),x(s)-z\rangle\right)ds\in\R\end{equation}

Since $\psi\geq 0$, we obtain for every $t \in [0,+\infty)$ 
$$\widetilde\beta(t)\psi(x(t))+\langle \nabla \phi(z),x(t)-z\rangle\geq \frac{\widetilde\beta(t)}{2}\psi(x(t))+\langle \nabla \phi(z),x(t)-z\rangle$$ and from here,
similarly to \eqref{ineq-conj},
$$\frac{\widetilde\beta(t)}{2}\psi(x(t))+\langle \nabla \phi(z),x(t)-z\rangle\geq 
\frac{\widetilde\beta(t)}{2}\left(-\psi^*\left(\frac{-2\nabla \phi(z)}{\widetilde\beta(t)}\right)+\sigma_{\argmin \psi}\left(\frac{-2\nabla \phi(z)}{\widetilde\beta(t)}\right)\right).$$

Thus, for almost every $t \in [0,+\infty)$ it holds
\begin{align*} & \ddot h_z(t)+\gamma\dot h_z(t)+\frac{1}{\gamma}\dot E(t)+
\frac{\widetilde\beta(t)}{2}\left(-\psi^*\left(\frac{-2\nabla \phi(z)}{\widetilde\beta(t)}\right)+\sigma_{\argmin \psi}\left(\frac{-2\nabla \phi(z)}{\widetilde\beta(t)}\right)\right)\\
\leq & \ \ddot h_z(t)+\gamma\dot h_z(t)+\frac{1}{\gamma}\dot E(t) +\frac{\widetilde\beta(t)}{2}\psi(x(t))+\langle \nabla \phi(z),x(t)-z\rangle\\
\leq & \ \ddot h_z(t)+\gamma\dot h_z(t)+\frac{1}{\gamma}\dot E(t) +\widetilde\beta(t)\psi(x(t))+\langle \nabla \phi(z),x(t)-z\rangle\\
\leq & \ 0.
\end{align*}
By using the same arguments as in the proof of \eqref{b} it yields that
\begin{equation}\label{b2}\exists \lim_{t\rightarrow+\infty}\int_0^t\left(\frac{\widetilde\beta(s)}{2}\psi(s)+\langle \nabla \phi(z),x(s)-z\rangle\right)ds\in\R.\end{equation}
Finally, from \eqref{b}, \eqref{b2} and \eqref{def-beta-tilde} we obtain (ii). 

(iii) Follows from (ii), \eqref{b} and \eqref{def-beta-tilde}.

(iv) Follows from \eqref{ineq2'} and \eqref{ineq-conj}, by using the same arguments as for proving statement \eqref{b}. 

(v) From \eqref{e1} and $(H_{\beta})$ we derive for almost every $t\geq 0$ 
\begin{equation}\label{ineq7}\dot E(t)+\gamma\|\dot x(t)\|^2\leq k \beta(t)\psi(x(t)).\end{equation}
Since $E$ is bounded from below, integrating the last inequality and taking into account (ii) we conclude that $\dot x\in L^2([0,+\infty);{\cal H})$. 

(vi) Combining \eqref{ineq3} with \eqref{e1} we obtain 
\begin{equation}\label{ineq5}\ddot h_z(t)+\gamma\dot h_z(t)+\frac{\dot \beta(t)}{\gamma}\psi(x(t))\leq \|\dot x(t)\|^2+
\widetilde\beta(t)\left(\psi^*\left(\frac{-\nabla\phi(z)}{\widetilde\beta(t)}\right)-\sigma_{\argmin \psi}\left(\frac{-\nabla\phi(z)}{\widetilde\beta(t)}\right)\right)\end{equation}
 for almost every $t \in [0,+\infty)$. Notice that $\dot\beta(t)\geq 0$, hence 
 \begin{equation}\label{ineq6}\ddot h_z(t)+\gamma\dot h_z(t)\leq \|\dot x(t)\|^2+
\widetilde\beta(t)\left(\psi^*\left(\frac{-\nabla\phi(z)}{\widetilde\beta(t)}\right)-\sigma_{\argmin \psi}\left(\frac{-\nabla\phi(z)}{\widetilde\beta(t)}\right)\right)\end{equation}
 for almost every $t \in [0,+\infty)$.

According to (v), $(H)$, \eqref{opt-cond}, \eqref{def-beta-tilde} and the fact that $\ran N_{\argmin \psi}$ is a cone, 
the function on the right-hand side of the above inequality is $L^1$-integrable on $[0,+\infty)$. 
Applying now Lemma \ref{sec-ord-fej} we deduce that (i) holds. 
\end{proof}

\begin{remark} The assumption $\dot\beta\geq 0$ has been used in the above proof only for showing statement (vi).   
\end{remark}

For the main result of the paper concerning the weak asymptotic convergence of the trajectory generated by the dynamical 
system \eqref{dyn-syst}, that we formulate and prove afterwards, we will make use of the continuous version of the Opial Lemma.

\begin{lemma}\label{opial-cont} Let $S$ be a nonempty subset of the real Hilbert space ${\cal H}$ and $x:[0,+\infty)\rightarrow{\cal H}$ 
a given function. Assume that 
\begin{enumerate}
\item [(i)] $\lim_{t\rightarrow+\infty}\|x(t)-z\|$ exists for every $z \in S$; 
\item[(ii)] every weak limit point of $x$ belongs to $S$. 
\end{enumerate}
Then there exists $x_{\infty}\in S$ such that $x(t)$ converges weakly to $x_{\infty}$ as $t\rightarrow+\infty$. 
\end{lemma}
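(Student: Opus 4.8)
The plan is to run the standard continuous Opial argument: first extract weak limit points from boundedness, then use hypothesis (i) to show there is only one, and finally upgrade uniqueness of the weak sequential limit point to genuine weak convergence.

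First I would note that (i) forces $x$ to be bounded: fixing an arbitrary $z\in S$, the real function $t\mapsto\|x(t)-z\|$ has a finite limit as $t\to+\infty$, hence is bounded on $[0,+\infty)$, and so is $x$. Since ${\cal H}$ is a Hilbert space, hence reflexive, bounded sequences possess weakly convergent subsequences; thus there is at least one sequence $t_n\to+\infty$ with $x(t_n)\rightharpoonup\bar x$ for some $\bar x\in{\cal H}$, and by (ii) we get $\bar x\in S$. Consequently the set of weak limit points of $x$ is a nonempty subset of $S$.

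The core step is to prove that this set is a singleton. Suppose $t_n\to+\infty$ and $s_n\to+\infty$ satisfy $x(t_n)\rightharpoonup x_1$ and $x(s_n)\rightharpoonup x_2$; by (ii), $x_1,x_2\in S$. Expanding the squared norms, for every $t\in[0,+\infty)$ one has
\[
\|x(t)-x_1\|^2-\|x(t)-x_2\|^2 = -2\langle x(t),x_1-x_2\rangle+\|x_1\|^2-\|x_2\|^2 .
\]
By (i) applied with $z=x_1$ and with $z=x_2$, both $\|x(t)-x_1\|^2$ and $\|x(t)-x_2\|^2$ converge as $t\to+\infty$, so the left-hand side converges; therefore $t\mapsto\langle x(t),x_1-x_2\rangle$ admits a finite limit $\ell$. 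Passing to the limit along $t_n$ and using $x(t_n)\rightharpoonup x_1$ gives $\ell=\langle x_1,x_1-x_2\rangle$, while passing along $s_n$ gives $\ell=\langle x_2,x_1-x_2\rangle$; subtracting yields $\|x_1-x_2\|^2=0$, hence $x_1=x_2$. Denote the unique weak limit point by $x_\infty$; by the previous paragraph $x_\infty\in S$.

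It remains to show $x(t)\rightharpoonup x_\infty$ as $t\to+\infty$, i.e. $\langle x(t),u\rangle\to\langle x_\infty,u\rangle$ for every $u\in{\cal H}$. If this failed there would exist $u\in{\cal H}$, $\varepsilon>0$ and a sequence $t_n\to+\infty$ with $|\langle x(t_n)-x_\infty,u\rangle|\ge\varepsilon$ for all $n$; since $(x(t_n))_n$ is bounded, a subsequence converges weakly to some $x'$, which is then a weak limit point of $x$ and hence equals $x_\infty$, contradicting the displayed lower bound. This proves the claim and finishes the proof. There is no genuine obstacle here — the argument is classical — but the two places where one must be attentive are the use of reflexivity of ${\cal H}$ to produce weak sequential limit points of the bounded trajectory, and the fact that the cross term in the expansion of $\|x(t)-x_1\|^2-\|x(t)-x_2\|^2$ is linear in $x(t)$, which is exactly what makes weak convergence usable in the uniqueness step.
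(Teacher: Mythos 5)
Your proof is correct and is the classical continuous Opial argument (uniqueness of the weak sequential limit point via the expansion of $\|x(t)-x_1\|^2-\|x(t)-x_2\|^2$, then weak convergence by a subsequence/contradiction argument); the paper itself states this lemma without proof, as a known tool, so there is nothing to compare against beyond noting that your argument is the standard one. The only tiny imprecision is the claim that (i) makes $x$ bounded on all of $[0,+\infty)$: without continuity of $x$ it only gives boundedness on some $[T,+\infty)$, which is all you actually need for extracting weak limit points along sequences $t_n\rightarrow+\infty$.
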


We can state now the main theorem of the paper. 

\begin{theorem}\label{th-nonerg-conv} Assume that $(H_\psi)$, $(H_\phi)$, 
$(H_{\beta})$ and $(H)$ hold under the supplementary condition $\lim_{t\rightarrow+\infty}\beta(t)=+\infty$ and let 
$x:[0,+\infty)\rightarrow{\cal H}$ be the trajectory generated by the dynamical system \eqref{dyn-syst}. 
Then the following statements are true: 
\begin{enumerate}
 \item [(i)] $\phi(x(t))$ converges to the optimal objective value of \eqref{opt-intr} as $t\rightarrow+\infty$;
 \item[(ii)] $\lim_{t\rightarrow+\infty}\beta(t)\psi(x(t))=\lim_{t\rightarrow+\infty}\psi(x(t))=0$;
 \item[(iii)] $\int_0^{+\infty}\beta(t)\psi(x(t))dt<+\infty$;
 \item[(iv)] $\int_0^{+\infty}\|\dot x(t)\|^2dt<+\infty$; 
 \item [(v)] $\lim_{t\rightarrow+\infty}\dot x(t)=0$; 
 \item[(vi)] there exists $x_{\infty}\in S$ such that $x(t)$ converges weakly to $x_{\infty}$ as $t\rightarrow+\infty$. 
\end{enumerate} 
\end{theorem}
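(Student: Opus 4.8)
The plan is to build everything on top of Lemma \ref{l1-op}, on the convergence of the energy functional $E$ from \eqref{E}, and on one weak-limit-point argument. Throughout fix $z\in S$ and write $\phi^{*}:=\phi(z)$ for the optimal value of \eqref{opt-intr} (it does not depend on the choice of $z\in S$). The first step is to show that $\lim_{t\to+\infty}E(t)=:E_{\infty}$ exists in $\R$: by \eqref{e1} and $(H_\beta)$ one has $\dot E(t)=-\gamma\|\dot x(t)\|^{2}+\dot\beta(t)\psi(x(t))\le k\beta(t)\psi(x(t))$ for a.e.\ $t\ge 0$, the right-hand side lies in $L^{1}([0,+\infty))$ by Lemma \ref{l1-op}(ii), and $E$ is bounded from below since $\phi$ is bounded from below and $\beta\psi\ge 0$; hence Lemma \ref{fejer-cont1} applies.

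The second step is to identify $E_{\infty}=\phi^{*}$. Decomposing $E(s)-\phi^{*}=\frac{1}{2}\|\dot x(s)\|^{2}+\beta(s)\psi(x(s))+\big(\phi(x(s))-\phi^{*}\big)$, I claim $\lim_{t\to+\infty}\int_{0}^{t}(E(s)-\phi^{*})\,ds$ exists in $\R$: the first two summands are nonnegative and integrable by Lemma \ref{l1-op}(v) and (ii), while $\int_{0}^{t}(\phi(x(s))-\phi^{*})\,ds$ has a finite limit, obtained from Lemma \ref{l1-op}(iv) after subtracting the convergent integral of $\widetilde\beta(\cdot)\,\psi(x(\cdot))$. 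Since $E(t)\to E_{\infty}$ while $\int_{0}^{t}(E(s)-\phi^{*})\,ds$ converges, necessarily $E_{\infty}=\phi^{*}$, for otherwise $E(s)-\phi^{*}$ would be eventually bounded away from $0$ and the integral would diverge. In particular $\phi(x(t))\le E(t)\to\phi^{*}$, so $\limsup_{t\to+\infty}\phi(x(t))\le\phi^{*}$.

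The heart of the proof is the reverse bound $\liminf_{t\to+\infty}\phi(x(t))\ge\phi^{*}$. By Lemma \ref{l1-op}(iii)--(iv) and convexity of $\phi$, the nonnegative function $B(s):=\phi(x(s))-\phi^{*}-\langle\nabla\phi(z),x(s)-z\rangle$ is integrable on $[0,+\infty)$, and hence so is $s\mapsto\beta(s)\psi(x(s))+B(s)$ by Lemma \ref{l1-op}(ii). Choose $t_{n}\to+\infty$ with $\phi(x(t_{n}))\to\liminf_{t\to+\infty}\phi(x(t))$ and, passing to a subsequence, $x(t_{n})\rightharpoonup\bar x$ (the range of $x$ is bounded by Lemma \ref{l1-op}(i)). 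Since $\beta(t)\to+\infty$ we get $\int_{t_{n}}^{t_{n}+1}\psi(x(s))\,ds\to 0$, hence $\int_{t_{n}}^{t_{n}+1}\big(\psi(x(s))+B(s)\big)\,ds\to 0$, and a mean-value choice of $\sigma_{n}\in[t_{n},t_{n}+1]$ makes $\psi(x(\sigma_{n}))\to 0$ and $B(\sigma_{n})\to 0$ simultaneously. As $\dot x\in L^{2}$ (Lemma \ref{l1-op}(v)), $\|x(\sigma_{n})-x(t_{n})\|\le\big(\int_{t_{n}}^{t_{n}+1}\|\dot x\|^{2}\big)^{1/2}\to 0$, so $x(\sigma_{n})\rightharpoonup\bar x$ and $\phi(x(t_{n}))-\phi(x(\sigma_{n}))\to 0$ by local Lipschitz continuity of $\phi$ on the bounded range of $x$. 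Weak lower semicontinuity of $\psi$ gives $\psi(\bar x)=0$, i.e.\ $\bar x\in\argmin\psi$, whence $\langle\nabla\phi(z),\bar x-z\rangle\ge 0$ by \eqref{opt-cond}. Letting $n\to+\infty$ in $\phi(x(\sigma_{n}))=\phi^{*}+\langle\nabla\phi(z),x(\sigma_{n})-z\rangle+B(\sigma_{n})$ yields $\phi(x(t_{n}))\to\phi^{*}+\langle\nabla\phi(z),\bar x-z\rangle\ge\phi^{*}$, which is the claim. Combined with the previous paragraph this proves (i): $\phi(x(t))\to\phi^{*}$.

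The remaining statements then follow quickly. From $E(t)=\frac{1}{2}\|\dot x(t)\|^{2}+\phi(x(t))+\beta(t)\psi(x(t))\to\phi^{*}$ and $\phi(x(t))\to\phi^{*}$, with $\frac{1}{2}\|\dot x(t)\|^{2},\beta(t)\psi(x(t))\ge 0$, I obtain $\|\dot x(t)\|\to 0$ and $\beta(t)\psi(x(t))\to 0$, and then $\psi(x(t))\to 0$ because $\beta(t)\ge 1$ for $t$ large; this gives (ii) and (v), while (iii) and (iv) are verbatim Lemma \ref{l1-op}(ii) and (v). For (vi) I would apply the Opial Lemma \ref{opial-cont}: its hypothesis (i) is Lemma \ref{l1-op}(vi), and for hypothesis (ii), any weak sequential limit point $\bar x$ of the trajectory satisfies $\psi(\bar x)\le\liminf_{n}\psi(x(t_{n}))=0$ and $\phi(\bar x)\le\liminf_{n}\phi(x(t_{n}))=\phi^{*}$ by weak lower semicontinuity, so $\bar x\in\argmin\psi$ and $\phi(\bar x)\le\phi^{*}$, i.e.\ $\bar x\in S$. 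I expect the main obstacle to be the $\liminf$ estimate in the third paragraph: the integral bounds of Lemma \ref{l1-op} have to be converted into pointwise information along a sequence that still converges weakly to $\bar x$, which forces the windowed averaging over $[t_{n},t_{n}+1]$ and the use of the optimality/normal-cone inclusion \eqref{opt-cond}; once $E_{\infty}=\phi^{*}$ is available, the rest is bookkeeping.
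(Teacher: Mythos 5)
Your proposal is correct, and it follows the paper's overall architecture (energy functional $E$ from \eqref{E}, convergence of $E$ via Lemma \ref{fejer-cont1} and \eqref{ineq7}, identification of the limit of $E$ with the optimal value, then the Opial Lemma \ref{opial-cont} fed by Lemma \ref{l1-op}(vi)), but the middle of your argument genuinely differs in two places. First, for $\lim_{t\to+\infty}E(t)=\phi(z)$ the paper proceeds in the opposite order: it first proves $\psi(x(t))\to 0$ by dividing $E(t)$ by $\beta(t)$ and using $\beta(t)\to+\infty$, then gets $\liminf_{t}\phi(x(t))\ge\phi(z)$ by invoking \cite[Lemma 3.4]{att-cza-10} as a black box, and only then rules out $\lim E>\phi(z)$ by an integration/contradiction argument based on Lemma \ref{l1-op}(ii), (iv), (v); you instead observe directly that $\int_0^t\bigl(E(s)-\phi(z)\bigr)ds$ converges (same three items of Lemma \ref{l1-op}), which together with the existence of $\lim E(t)$ forces the limit to equal $\phi(z)$ without any prior lower bound on $\liminf\phi(x(t))$. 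Second, you replace the citation of the Attouch--Czarnecki lemma by a self-contained proof of the $\liminf$ inequality: integrability of the Bregman-type gap $B(s)=\phi(x(s))-\phi(z)-\langle\nabla\phi(z),x(s)-z\rangle$ (from Lemma \ref{l1-op}(iii)--(iv)) plus a mean-value selection $\sigma_n\in[t_n,t_n+1]$, the $L^2$-bound on $\dot x$ to keep the same weak limit point, weak lower semicontinuity of $\psi$ and \eqref{opt-cond} --- exactly the ingredients the paper says make the cited lemma carry over, so in substance you have supplied the proof the paper outsources. What each route buys: the paper's order makes $\psi(x(t))\to 0$ available early (at the price of using $\beta(t)\to+\infty$ and the external lemma), whereas your order is self-contained, obtains $\psi(x(t))\to 0$ and $\beta(t)\psi(x(t))\to 0$ as byproducts of $E(t)\to\phi(z)$ and item (i), and uses the hypothesis $\beta(t)\to+\infty$ only marginally (to pass from $\beta\psi\to 0$ to $\psi\to 0$ and in the window estimate, where monotonicity of $\beta$ would already suffice). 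The remaining items (iii), (iv), (vi) are handled exactly as in the paper.
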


\begin{proof} Consider again the energy functional defined in \eqref{E}. Taking into account that $E$ is bounded from below, from 
\eqref{ineq7}, Lemma \ref{l1-op}(ii) and Lemma \ref{fejer-cont1} we derive that 
\begin{equation}\label{exists-lim-e}\exists\lim_{t\rightarrow+\infty}E(t)\in\R.\end{equation}
We claim that \begin{equation}\label{psi-0}\lim_{t\rightarrow+\infty}\psi(x(t))=0.\end{equation}
Indeed, notice that 
\begin{equation}\label{e/b}\frac{E(t)}{\beta(t)}=\frac{\|\dot x(t)\|^2}{2\beta(t)}+\frac{\phi(x(t))}{\beta(t)}+\psi(x(t)).\end{equation}
Let $z\in S$ be arbitrary. Since $\phi$ is bounded from below and by taking into account the inequality 
$$\phi(x(t))\leq \phi(z)+\langle \nabla\phi(x(t)),x(t)-z\rangle \forall t \in [0,+\infty),$$
that $x$ is bounded, $\nabla \phi$ is Lipschitz continuous and $\lim_{t\rightarrow+\infty}\beta(t)=+\infty$, we can easily conclude that
$\lim_{t\rightarrow+\infty}\frac{\phi(x(t))}{\beta(t)}=0$. Furthermore, since $\dot x$ is bounded, by using \eqref{exists-lim-e}, 
we easily derive \eqref{psi-0} by passing to the limit as $t\rightarrow+\infty$ in \eqref{e/b}. 

Now we invoke \cite[Lemma 3.4]{att-cza-10} for the relation 
\begin{equation}\label{liminf-phi}\liminf_{t\rightarrow+\infty}\phi(x(t))\geq\phi(z).\end{equation} Indeed, 
this was stated in \cite[Lemma 3.4]{att-cza-10} for the first order companion of the dynamical system \eqref{dyn-syst}. 
The result remains true for \eqref{dyn-syst}, too, since a careful look at the proof of Lemma 3.4 in \cite{att-cza-10} reveals 
that ingredients which lead to the conclusion are the statements in Lemma \ref{l1-op}(iii), the fact that the trajectory $x$ is bounded, 
the weak lower semicontinuity of $\psi$, equation \eqref{psi-0}, the inequality \eqref{phi-conv} and the relation \eqref{opt-cond}.

Since $E(t)\geq \phi(x(t))$, from \eqref{liminf-phi} we have $\lim_{t\rightarrow+\infty}E(t)\geq \phi(z)$. 
We claim that \begin{equation}\label{lim-e}\lim_{t\rightarrow+\infty}E(t)= \phi(z).\end{equation}

Let us assume that $\lim_{t\rightarrow+\infty}E(t) > \phi(z)$. Then there exist $\theta>0$ and $t_0\geq 0$ such that 
for every $t\geq t_0$ we have 
\begin{equation}\phi(x(t))+\beta(t)\psi(x(t))+\frac{1}{2}\|\dot x(t)\|^2>\phi(z)+\theta.\end{equation}
Hence for every $t\geq t_0$ 
\begin{equation}\theta<\phi(x(t))-\phi(z)+\left(1-\frac{k}{\gamma}\right)\beta(t)\psi(x(t))+
\frac{k}{\gamma}\beta(t)\psi(x(t))+\frac{1}{2}\|\dot x(t)\|^2.\end{equation}
Integrating the last inequality and taking into account Lemma \ref{l1-op}(ii), (iv) and (v) we obtain a contradiction. Hence 
\eqref{lim-e} holds. 

Since $\phi(x(t))\leq E(t)$ we derive from \eqref{lim-e} that $\limsup_{t\rightarrow+\infty}\phi(x(t))\leq\phi(z)$, which combined 
with \eqref{liminf-phi} imply (i). 

Further, notice that due to $\lim_{t\rightarrow+\infty}E(t)=\lim_{t\rightarrow+\infty}\phi(x(t))=\phi(z)$, from 
\eqref{E} we obtain $$\lim_{t\rightarrow+\infty}\left(\beta(t)\psi(x(t))+\frac{1}{2}\|\dot x(t)\|^2\right)=0,$$
hence \begin{equation}\label{b-psi}\lim_{t\rightarrow+\infty}\beta(t)\psi(x(t))=0\end{equation}
and $$\lim_{t\rightarrow+\infty}\|\dot x(t)\|=0.$$

Thus (ii) and (v) hold. The statements (iii) and (iv) have been proved in Lemma \ref{l1-op}. 

Statement (vi) will be a consequence of the Opial Lemma. Since according to  Lemma \ref{l1-op}(vi), the first condition 
in the Opial Lemma is fulfilled, it remains to show that
every weak limit point of $x(t)$ as $t\rightarrow+\infty$ belongs to $S$. 

Let $(t_n)_{n\in\N}$ be a sequence of positive numbers such that $\lim_{n\rightarrow+\infty}t_n=+\infty$ and $x(t_n)$ converges weakly 
to $x_{\infty}$ as $n\rightarrow+\infty$. By using the weak lower semicontinuity of 
$\psi$ and (ii) we obtain 
$$0\leq \psi(x_{\infty})\leq\liminf_{n\rightarrow+\infty}\psi(x(t_n))=0,$$
hence $x_{\infty}\in \argmin\psi$. Moreover, the weak lower semicontinuity of 
$\phi$ and (i) yield
$$\phi(x_{\infty})\leq\liminf_{n\rightarrow+\infty}\phi(x(t_n))=\phi(z),$$
thus $x_{\infty}\in S$.  
\end{proof}

We show in the following that if the objective function of \eqref{opt-intr} is strongly convex, then the trajectory generated by \eqref{dyn-syst} converges strongly to the unique optimal solution of \eqref{opt-intr}. 

\begin{theorem}\label{str-mon} Assume that $(H_\psi)$, $(H_\phi)$, 
$(H_{\beta})$ and $(H)$ hold and let $x:[0,+\infty)\rightarrow{\cal H}$ be the trajectory generated by the dynamical system \eqref{dyn-syst}.  If $\phi$ is strongly convex, then $x(t)$ converges strongly to the unique optimal solution of \eqref{opt-intr}  as $t\rightarrow+\infty$. 
\end{theorem}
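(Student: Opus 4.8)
The plan is to revisit the Lyapunov analysis behind Lemma \ref{l1-op} and to observe that strong convexity of $\phi$ inserts an additional restoring term into the differential inequality governing $h_z(t)=\tfrac12\|x(t)-z\|^2$. From that refined inequality I would deduce that $h_z$ is integrable on $[0,+\infty)$; since Lemma \ref{l1-op}(vi) already tells us that $\lim_{t\to+\infty}h_z(t)$ exists, this forces the limit to be $0$, which is exactly strong convergence of the trajectory.

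First, fix a modulus of strong convexity $\mu>0$ for $\phi$. Then $\phi+\delta_{\argmin\psi}$ is strongly convex on the nonempty convex set $\argmin\psi$, so \eqref{opt-intr} has a unique optimal solution $z$ and $S=\{z\}$. I would then rerun the computation in the proof of Lemma \ref{l1-op} for this $z$, changing only one ingredient: the convexity estimate $\langle\nabla\phi(x(t)),x(t)-z\rangle\ge\phi(x(t))-\phi(z)$ is upgraded to $\langle\nabla\phi(x(t)),x(t)-z\rangle\ge\phi(x(t))-\phi(z)+\mu h_z(t)$. Carrying this extra term $-\mu h_z(t)$ through the passages leading to \eqref{h-e-2}, \eqref{ineq2'} and \eqref{ineq6}, and using there, exactly as in the original argument, the Young--Fenchel estimate \eqref{ineq-conj}, the subgradient inequality \eqref{phi-conv}, the growth condition $(H_{\beta})$, the optimality relation \eqref{opt-cond} together with the fact that $\ran N_{\argmin\psi}$ is a cone, and the energy identity \eqref{e1} with $\dot\beta\ge0$, I obtain for almost every $t\in[0,+\infty)$
\begin{equation*}
\ddot h_z(t)+\gamma\dot h_z(t)+\mu h_z(t)\le \|\dot x(t)\|^2+\widetilde\beta(t)\left(\psi^*\!\left(\frac{-\nabla\phi(z)}{\widetilde\beta(t)}\right)-\sigma_{\argmin\psi}\!\left(\frac{-\nabla\phi(z)}{\widetilde\beta(t)}\right)\right)=:G(t).
\end{equation*}
By Lemma \ref{l1-op}(v) one has $\|\dot x\|^2\in L^1([0,+\infty))$, and the remaining summand lies in $L^1([0,+\infty))$ by $(H)$ and \eqref{opt-cond} (exactly as in the proof of Lemma \ref{l1-op}(vi)); hence $G\in L^1([0,+\infty))$.

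It then remains to integrate the last inequality on $[0,T]$. Since $h_z\ge0$ and, by Lemma \ref{l1-op}(i) together with \eqref{dh}, both $h_z$ and $\dot h_z(t)=\langle\dot x(t),x(t)-z\rangle$ are bounded, integration yields $\mu\int_0^T h_z(s)\,ds\le\|G\|_{L^1}+|\dot h_z(T)|+|\dot h_z(0)|+\gamma\bigl(h_z(T)+h_z(0)\bigr)$, a bound uniform in $T>0$, so $h_z\in L^1([0,+\infty))$. A nonnegative integrable function on $[0,+\infty)$ that has a limit at $+\infty$ must have limit $0$; combined with Lemma \ref{l1-op}(vi) this gives $\lim_{t\to+\infty}h_z(t)=0$, i.e. $x(t)\to z$ strongly. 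I do not expect a genuine obstacle here: the only points demanding care are the bookkeeping needed to propagate the $-\mu h_z(t)$ term correctly through the chain of inequalities of Lemma \ref{l1-op}, and the use of boundedness of $\dot h_z$ (not merely of $h_z$) when passing to the limit after integration.
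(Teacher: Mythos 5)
Your proposal is correct and follows essentially the same route as the paper: strengthen the convexity inequality by the strong-convexity term $\tfrac{\mu}{2}\|x(t)-z\|^2$, carry it through the Lyapunov inequality of Lemma \ref{l1-op}, integrate to get $\int_0^{+\infty}\|x(t)-z\|^2\,dt<+\infty$ using boundedness of $\dot h_z$ and the $L^1$ bounds from $(H)$, and conclude via the existence of $\lim_{t\to+\infty}\|x(t)-z\|$ from Lemma \ref{l1-op}(vi). The only cosmetic differences are that you apply strong convexity with the gradient at $x(t)$ rather than at $z$ (inequality \eqref{phi-str-conv}) and eliminate $\dot E$ via \eqref{e1} and $\dot x\in L^2$ instead of keeping $\tfrac{1}{\gamma}\dot E$ and using that $E$ is bounded from below; both variants are valid.
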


\begin{proof} Let $\gamma>0$ be such that $\phi$ is $\gamma$-strongly 
convex. It is a well-known fact that in case the optimization problem 
\eqref{opt-intr} has a unique optimal solution, which we denote by $z$. 

By combining \eqref{ineq-conj} with the stronger inequality
\begin{equation}\label{phi-str-conv}\phi(x(t))-\phi(z)\geq \langle \nabla \phi(z),x(t)-z\rangle+\frac{\gamma}{2}\|x(t)-z\|^2 \ \forall t \in [0,+\infty),\end{equation}
we obtain this time (see the proof of Lemma \ref{l1-op}) for almost every $t \in [0,+\infty)$
\begin{align*} & \ddot h_z(t)+\gamma\dot h_z(t)+\frac{\gamma}{2}\|x(t)-z\|^2 + \frac{1}{\gamma}\dot E(t)+
\widetilde\beta(t)\left(-\psi^*\left(\frac{-\nabla\phi(z)}{\widetilde\beta(t)}\right)+\sigma_{\argmin \psi}\left(\frac{-\nabla\phi(z)}{\widetilde\beta(t)}\right)\right)\nonumber\\
\leq & \ \ddot h_z(t)+\gamma\dot h_z(t)+\frac{\gamma}{2}\|x(t)-z\|^2 + \frac{1}{\gamma}\dot E(t) +\widetilde\beta(t)\psi(x(t))+\langle \nabla\phi(z),x(t)-z\rangle\nonumber\\
\leq &\ \ddot h_z(t)+\gamma\dot h_z(t)+\frac{1}{\gamma}\dot E(t) 
+\phi(x(t))-\phi(z)+\widetilde\beta(t)\psi(x(t))\nonumber\\\leq & \ 0.
\end{align*}

Taking into account $(H)$ and that $E$ and $h_z$ are bounded from below, by integration of the above inequality we obtain that there exists a constant $L\in\R$ such that $$\dot h_z(T)+\frac{\gamma}{2}\int_0^T\|x(t)-z\|^2dt\leq L \ \forall T>0.$$
Notice that due to Lemma \ref{l1-op}(i) the function $T\mapsto\dot h_z(T)$ is bounded, hence 
$$\int_0^{+\infty}\|x(t)-z\|^2dt<+\infty.$$ Since according to Lemma \ref{l1-op}(vi) $\lim_{t\rightarrow+\infty}\|x(t)-z\|$ exists, we conclude that 
$\|x(t)-z\|$ converges to $0$ as $t\rightarrow+\infty$ and the proof is complete.  
\end{proof}

\begin{remark}\label{rem13} 
The results presented in this paper remain true even if the assumed growth condition is satisfied starting
with a $t_0\geq 0$, that is, if there exists $t_0\geq 0$ such that 
$$0\leq\dot\beta(t)\leq k\beta(t) \ \forall t\geq t_0.$$
\end{remark}

\noindent {\bf Acknowledgements.} The authors are grateful to an anonymous reviewer for pointing out a flaw in the initial version 
of the paper.

\end{document}